\documentclass[11pt]{amsart}

\usepackage[a4paper,margin=30mm]{geometry}
\usepackage[T1]{fontenc}
\usepackage{lmodern}
\usepackage{microtype}
\usepackage{amsmath,amssymb,amsthm,mathtools}
\usepackage{booktabs}
\usepackage{enumitem}
\usepackage{hyperref}

\hypersetup{
  colorlinks=true,
  linkcolor=blue,
  citecolor=blue,
  urlcolor=blue
}

\theoremstyle{plain}
\newtheorem{theorem}{Theorem}
\newtheorem{lemma}{Lemma}

\theoremstyle{remark}

\DeclareMathOperator{\Id}{Id}

\newcommand{\R}{\mathbb R}

\title[A simply connected nilpotent Lie group with a closed geodesic]
{A simply connected nilpotent Lie group with a closed geodesic}

\author{Glen Wheeler}
\address{School of Mathematics and Physics, University of Wollongong, Wollongong, NSW 2522, Australia}

\subjclass[2020]{53C22, 53C30, 22E25, 53C21}
\keywords{closed geodesic, homogeneous Riemannian manifold, nilpotent Lie group, left-invariant metric, B\"ohm--Lafuente problem}

\begin{document}

\begin{abstract}
We give a six-dimensional simply connected nilpotent Lie group together with a left-invariant Riemannian metric admitting a nonconstant closed geodesic.  The example gives a
positive answer to B\"ohm--Lafuente's question asking whether a homogeneous
Riemannian manifold diffeomorphic to Euclidean space can have a closed geodesic.
\end{abstract}

\maketitle

\section{Introduction}

B\"ohm and Lafuente asked whether a homogeneous Riemannian manifold diffeomorphic to Euclidean space can admit a nonconstant closed geodesic \cite[Section~8]{BoehmLafuente2018Immortal}.  They noted in particular that the question was open for left-invariant metrics on nilpotent Lie groups.

For nilpotent Lie groups, several nearby results point in the opposite direction.  Naitoh and Sakane studied geodesics and conjugate points on two-step nilpotent Lie groups \cite{NaitohSakane1981Conjugate}, and Berichon later formulated the B\"ohm--Lafuente problem explicitly and gave nonexistence results for simply connected two-step nilpotent groups with left-invariant metrics \cite{Berichon2019ClosedGeodesicsEuclidean}.  There is also a substantial literature on closed geodesics in compact nilmanifolds and, especially, compact quotients of two-step nilpotent Lie groups; see for instance \cite{DeCoste2008ClosedChevalley,Eberlein1994GeometryTwoStepI,Eberlein1994GeometryTwoStepII,GornetMast2003LengthMinimizing,LeePark1996SmoothlyClosed,Mast1994ClosedGeodesics}.  In that compact setting a geodesic may close modulo a lattice even when its lift to the simply connected nilpotent group does not close.  Homogeneous geodesics and geodesic orbit spaces form another related strand; see \cite{Gordon1996GeodesicOrbits,GordonNikonorov2018GOstructuresRn,KowalskiVanhecke1991HomogeneousGeodesics,Nikonorov2024GO_nilmanifolds}.  Recent work on closed geodesics and conjugate points in Lie groups and homogeneous spaces provides further context \cite{LeBrigantLichtenfelzPreston2024Conjugate}.

The purpose of this note is to give an explicit nilpotent example.  We prove the following theorem.

\begin{theorem}\label{thm:main}
There exists a six-dimensional simply connected nilpotent Lie group \(N\cong\R^6\) and a left-invariant Riemannian metric \(g\) on \(N\) admitting a nonconstant closed geodesic.  More precisely, there is a four-step nilpotent Lie algebra \(\mathfrak n\), an inner product on \(\mathfrak n\), and a curve
\[
  \gamma(t)=\exp(\cos t\,X+\sin t\,Y)
\]
in the corresponding simply connected Lie group such that \(\gamma\) is a nonconstant geodesic and \(\gamma(t+2\pi)=\gamma(t)\) for all \(t\).
\end{theorem}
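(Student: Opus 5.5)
The plan is to reduce the geodesic condition to an algebraic identity on the Lie algebra, and then choose \(\mathfrak n\), the inner product and \(X,Y\) so that this identity holds. Periodicity is automatic. Since \(N\) is simply connected and nilpotent, \(\exp\colon\mathfrak n\to N\) is a diffeomorphism, and \(Z(t)=\cos t\,X+\sin t\,Y\) is \(2\pi\)-periodic, so \(\gamma(t+2\pi)=\gamma(t)\). Moreover \(\gamma\) is nonconstant as soon as \(X,Y\) are not both zero. Everything therefore rests on the geodesic equation.

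\textbf{Reduction to an Euler--Arnold equation.} I will work with the left-trivialised velocity \(V(t)=dL_{\gamma(t)^{-1}}\dot\gamma(t)\in\mathfrak n\). By the standard formula for the differential of the exponential map,
\[
  V=\frac{1-e^{-\operatorname{ad}_Z}}{\operatorname{ad}_Z}\,\dot Z
  =\dot Z-\tfrac12[Z,\dot Z]+\tfrac16[Z,[Z,\dot Z]]-\tfrac1{24}[Z,[Z,[Z,\dot Z]]].
\]
The series terminates here because \(\mathfrak n\) is four-step nilpotent. For a left-invariant metric, \(\gamma\) is a geodesic if and only if \(V\) satisfies the Euler--Arnold equation
\[
  \langle \dot V,W\rangle=\langle V,[V,W]\rangle\qquad\text{for all }W\in\mathfrak n,
\]
that is, \(\dot V=\operatorname{ad}_V^{*}V\).

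\textbf{Computing \(V\).} Write \(U=[X,Y]\). Then \([Z,\dot Z]=[X,Y]=U\) is constant. The higher terms are
\[
  [Z,U]=\cos t\,[X,U]+\sin t\,[Y,U],
\]
together with a quadratic term in \(\cos t,\sin t\) for the fourth bracket. So \(V(t)\) is an explicit trigonometric polynomial of degree at most \(2\) with coefficients in \(\mathfrak n\).

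\textbf{Choice of algebra.} I will take a basis \(X,Y,E_3,\dots,E_6\) with nonzero brackets of the form
\[
  [X,Y]=E_3,\quad [X,E_3]=a E_4,\quad [Y,E_3]=b E_5,\quad [X,E_5]=[Y,E_4]=E_6,
\]
or a similar filiform-type pattern, possibly with extra brackets. The coefficients must satisfy Jacobi; I will check that the chosen pattern does, or else take a known six-dimensional four-step algebra from the classification. I will use a diagonal or nearly diagonal inner product with free parameters. I will then expand both sides of the Euler--Arnold equation against each basis vector \(W\) and decompose them into the Fourier modes \(1,\cos t,\sin t,\cos 2t,\sin 2t,\dots\). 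This produces a finite polynomial system in the structure constants and metric parameters.

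\textbf{Main obstacle.} The hard step is solving this system with a positive-definite metric. The non-central components of \(\dot V\) involve rotation terms such as \(-\sin t\,X+\cos t\,Y\). These must be produced exactly by \(\operatorname{ad}_V^*V\) from the constant component \(-\tfrac12 E_3\) and from the degree-one and degree-two components of \(V\) lying in the higher layers. Fourier modes that cannot match must cancel.

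My first attempt is to impose the symmetry \(X\leftrightarrow Y\) combined with a rotation, making the metric invariant under an automorphism rotating the \((X,Y)\)-plane to first order. This collapses the mode equations to a few scalar ones. I will then solve for the ratios of norms, e.g. \(|E_3|^2\) against \(|X|^2\), and if needed add off-diagonal inner products between layers to absorb the \(2t\)-modes. Once explicit values are found, I will verify the identity \(\dot V=\operatorname{ad}_V^*V\) directly by substitution. This proves that \(\gamma\) is a geodesic, which completes the proof of Theorem~\ref{thm:main}.
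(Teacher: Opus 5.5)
There is a genuine gap. What you describe is a search strategy, not a proof. The theorem is an existence statement, and its whole content is an explicit algebra together with a positive definite inner product for which the Euler--Arnold equation holds. You never produce these (``once explicit values are found\dots''), so nothing is actually proved.

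Worse, the one concrete pattern you propose cannot work for any inner product. Jacobi on \((X,Y,E_3)\) gives \((b-a)E_6=0\), so \(a=b\neq0\). Then \([Z,[Z,[Z,\dot Z]]]=a\sin 2t\,E_6\), because \([X,E_5]\) and \([Y,E_4]\) each contribute \(\cos t\sin t\,E_6\). Since \(E_6\) is central, taking \(W=E_6\) in your Euler--Arnold equation gives \(\frac{d}{dt}\langle V,E_6\rangle=\langle V,[V,E_6]\rangle=0\). All other components of \(V\) carry only the modes \(1,\cos t,\sin t\). Hence the \(\sin 2t\)-coefficient of \(\langle V(t),E_6\rangle\) is \(-\frac{a}{24}\langle E_6,E_6\rangle\neq0\), and the geodesic equation fails. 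Off-diagonal metric entries cannot absorb this term. Adding brackets \([X,E_4],[Y,E_5]\in\R E_6\) does not remove it either.

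The missing idea is to make the rotation of the \((X,Y)\)-plane an exact one-parameter group of isometric automorphisms, not an approximate symmetry. In your pattern this is impossible. With \(a=b=1\), a derivation with \(DX=Y\), \(DY=-X\) is forced to have \(DE_3=0\) and \(DE_4=E_5\), and then \(0=D[X,E_4]=[Y,E_4]+[X,E_5]=2E_6\).

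Swapping the top brackets to \([X,E_4]=[Y,E_5]=E_6\), which is the paper's algebra, fixes this. Then \(D\) with \(DE_4=E_5\), \(DE_5=-E_4\), \(DE_6=0\) is a derivation with \(e^{2\pi D}=\Id\), and \(\gamma(t)=\exp(e^{tD}X)\). For a \(D\)-skew inner product one gets \(V(t)=e^{tD}V_0\) with \(V_0=Y-\frac12E_3+\frac16E_4-\frac1{24}E_6\). The Euler--Arnold ODE then collapses to the single time-independent condition \(\langle DW+[V_0,W],V_0\rangle=0\) for all \(W\). This is linear in the Gram matrix, and no Fourier modes need matching.

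Even then a diagonal metric fails: at \(W=X\) the condition reads \(|Y|^2+\frac12|E_3|^2+\frac1{12}|E_4|^2+\frac1{144}|E_6|^2=0\). One needs a strong coupling of the first and third layers. The paper takes \(\langle X,E_5\rangle=-\langle Y,E_4\rangle=24\), \(|X|^2=|Y|^2=5\), \(|E_4|^2=|E_5|^2=117\), \(|E_3|^2=1\), \(|E_6|^2=108\), with all other off-diagonal entries zero. This metric is nearly degenerate: its \(2\times2\) blocks have determinant \(9\). Exhibiting such a metric and verifying positivity and the linear condition on a basis is the actual proof, and your proposal does not contain it.
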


The process that we undertook in order to locate the example is described in Section~\ref{sec:search}.
In Section~\ref{sec:example} we give the Lie algebra, the metric, and the closed geodesic, finishing the proof of Theorem~\ref{thm:main} hold.

\section{Searching for closed geodesics in homogeneous spaces}\label{sec:search}

Let \(N\) be a simply connected nilpotent Lie group with Lie algebra \(\mathfrak n\).  A left-invariant metric on \(N\) is determined by an inner product \(\langle\cdot,\cdot\rangle\) on \(\mathfrak n\).  If \(\gamma\colon I\to N\) is a smooth curve, write
\[
  u(t)=dL_{\gamma(t)^{-1}}\gamma'(t)\in\mathfrak n
\]
for its left logarithmic velocity.  The geodesic equation is the Euler--Arnold equation
\begin{equation}\label{eq:arnold-search}
  \langle u'(t),E\rangle=\langle [u(t),E],u(t)\rangle
  \qquad\text{for all }E\in\mathfrak n .
\end{equation}
This follows immediately from the Koszul formula for a left-invariant metric.

The search method is as follows.  Choose a derivation \(D\in\operatorname{Der}(\mathfrak n)\) such that
\[
  e^{TD}=\Id
\]
for some \(T>0\).  Then \(e^{tD}\) integrates to a periodic one-parameter group of automorphisms of \(N\).  If the inner product is chosen so that
\begin{equation}\label{eq:Dskew-search}
  \langle DA,B\rangle+\langle A,DB\rangle=0
  \qquad\text{for all }A,B\in\mathfrak n,
\end{equation}
then these automorphisms are isometries.  Hence, for any \(W\in\mathfrak n\), the curve
\begin{equation}\label{eq:gammaW}
  \gamma_W(t)=\exp(e^{tD}W)
\end{equation}
is automatically \(T\)-periodic.  This removes the reconstruction condition from the search: only the geodesic equation remains.

Suppose \(\gamma_W\) has left logarithmic velocity \(u(t)\).  Since \(D\) is a derivation,
\[
  u(t)=e^{tD}u_0,
  \qquad u_0=u(0).
\]
For example, if \(\mathfrak n\) has step at most four, then
\begin{equation}\label{eq:u0-general}
  u_0=DW-\frac12[W,DW]+\frac16[W,[W,DW]]-
  \frac1{24}[W,[W,[W,DW]]].
\end{equation}
Using \eqref{eq:Dskew-search} and the fact that \(e^{tD}\) is both an isometry and an automorphism, equation \eqref{eq:arnold-search} is equivalent to the single algebraic condition
\begin{equation}\label{eq:linear-geodesic-condition}
  \langle DE+[u_0,E],u_0\rangle=0
  \qquad\text{for all }E\in\mathfrak n .
\end{equation}
For fixed \((\mathfrak n,D,W)\), the vector \(u_0\) is known and \eqref{eq:linear-geodesic-condition}, together with \eqref{eq:Dskew-search}, is linear in the unknown inner product.  The remaining condition is positivity of the resulting Gram matrix.

This procedure naturally rules out some small trials.  On the Heisenberg algebra \([X,Y]=Z\), with \(DX=Y\), \(DY=-X\), \(DZ=0\), and \(W=X\), one obtains \(u_0=Y-\frac12Z\).  A \(D\)-invariant positive inner product makes \(\operatorname{span}\{X,Y\}\) orthogonal to \(\R Z\), and testing \eqref{eq:linear-geodesic-condition} against \(E=X\) gives a sum of positive terms.  A direct three-step extension with \([X,Z]=U\), \([Y,Z]=V\), and \(D(U)=V\), \(D(V)=-U\), still fails by a positivity obstruction.  Adding the fourth step terms \([X,U]\) and \([Y,V]\) supplies exactly the additional fixed direction needed to satisfy the same linear equations with a positive definite metric.  The resulting six-dimensional example is given below.

\section{The example}\label{sec:example}

Let \(\mathfrak n\) be the real vector space with ordered basis
\[
  (X,Y,Z,U,V,Q).
\]
Define a skew-symmetric bilinear bracket by declaring the following brackets to be nonzero:
\begin{equation}\label{eq:brackets1}
  [X,Y]=Z,
  \qquad [X,Z]=U,
  \qquad [Y,Z]=V,
\end{equation}
\begin{equation}\label{eq:brackets2}
  [X,U]=Q,
  \qquad [Y,V]=Q,
\end{equation}
and by setting all other brackets between basis vectors equal to zero, apart from those forced by skew-symmetry.

\begin{lemma}\label{lem:liealg}
Equations \eqref{eq:brackets1}--\eqref{eq:brackets2} define a four-step nilpotent Lie algebra.
\end{lemma}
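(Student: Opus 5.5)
The plan is to exploit a grading. Assign degrees \(\deg X=\deg Y=1\), \(\deg Z=2\), \(\deg U=\deg V=3\), \(\deg Q=4\), and let \(\mathfrak n_k\) be the span of the basis vectors of degree \(k\). Inspecting \eqref{eq:brackets1}--\eqref{eq:brackets2}, every nonzero bracket of basis vectors satisfies \([\mathfrak n_i,\mathfrak n_j]\subseteq\mathfrak n_{i+j}\): for instance \(1+1=2\) for \([X,Y]=Z\), \(1+2=3\) for \([X,Z]=U\), and \(1+3=4\) for \([X,U]=Q\). Here \(\mathfrak n_k=0\) for \(k\ge5\). Extending bilinearly, the bracket is skew-symmetric by construction and is graded.

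First I will verify the Jacobi identity. The Jacobiator \(J(A,B,C)=[A,[B,C]]+[B,[C,A]]+[C,[A,B]]\) is trilinear and alternating. Indeed, for \(A=B\) it reduces to \([A,[A,C]]+[A,[C,A]]+[C,[A,A]]=0\) by skew-symmetry. So it suffices to check \(J\) on triples of \emph{distinct} basis vectors. By the grading, \(J(A,B,C)\) lies in degree \(\deg A+\deg B+\deg C\), so it vanishes automatically unless this sum is at most \(4\). Among distinct basis vectors, the only such triple (up to order) is \((X,Y,Z)\), of degrees \(1+1+2\). For it,
\[
J(X,Y,Z)=[X,V]+[Y,-U]+[Z,Z]=0,
\]
since \([X,V]=[Y,U]=0\) by the declared brackets. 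Hence \(\mathfrak n\) is a Lie algebra.

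Next I will compute the lower central series \(C^1=\mathfrak n\), \(C^{k+1}=[\mathfrak n,C^k]\). Directly from the brackets, \(C^2=\operatorname{span}\{Z,U,V,Q\}\) and \(C^3=\operatorname{span}\{U,V,Q\}\), since \([X,Z]=U\), \([Y,Z]=V\), \([X,U]=Q\). Further, \(C^4=\operatorname{span}\{Q\}\), because \([X,U]=Q\) while \(Q\) is central. Finally \(C^5=[\mathfrak n,\mathbb RQ]=0\). Thus \(\mathfrak n\) is nilpotent, and since \(C^4\neq0\) it is exactly four-step.

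The only step needing care is the reduction of the Jacobi check to the single triple \((X,Y,Z)\), which rests on the alternating property of \(J\) and the degree bound. The rest is bookkeeping.
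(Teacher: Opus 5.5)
Your proposal is correct and follows the paper's proof essentially step for step: the same grading, the same reduction of the Jacobi identity to the single triple \((X,Y,Z)\), and the same lower central series computation. Your remark that the Jacobiator is alternating, so triples with a repeated basis vector can be discarded, spells out a step the paper leaves implicit.
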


\begin{proof}
Only the Jacobi identity requires verification.  Assign degrees
\[
  \deg X=\deg Y=1,
  \qquad \deg Z=2,
  \qquad \deg U=\deg V=3,
  \qquad \deg Q=4.
\]
Every listed nonzero bracket respects degree addition.  Since there is no basis vector of degree greater than four, any Jacobiator with total degree greater than four vanishes term by term.  The only triple of distinct basis vectors with total degree at most four is \((X,Y,Z)\), and
\[
  [X,[Y,Z]]+[Y,[Z,X]]+[Z,[X,Y]]
  =[X,V]+[Y,-U]+[Z,Z]=0.
\]
Thus the Jacobi identity holds.  The lower central series is
\[
  \mathfrak n_2=\operatorname{span}\{Z,U,V,Q\},\qquad
  \mathfrak n_3=\operatorname{span}\{U,V,Q\},
\]
\[
  \mathfrak n_4=\operatorname{span}\{Q\},\qquad
  \mathfrak n_5=0.
\]
Since \(Q=[X,[X,[X,Y]]]\neq0\), the nilpotency step is four.
\end{proof}

Let \(N\) be the simply connected Lie group with Lie algebra \(\mathfrak n\).  Since \(\mathfrak n\) is nilpotent, the exponential map \(\exp\colon\mathfrak n\to N\) is a global diffeomorphism.  Thus \(N\cong\mathfrak n\cong\R^6\).

Define an inner product on \(\mathfrak n\) by the Gram matrix
\begin{equation}\label{eq:metric}
G=
\begin{pmatrix}
5&0&0&0&24&0\\
0&5&0&-24&0&0\\
0&0&1&0&0&0\\
0&-24&0&117&0&0\\
24&0&0&0&117&0\\
0&0&0&0&0&108
\end{pmatrix}
\end{equation}
in the ordered basis \((X,Y,Z,U,V,Q)\).  After reordering the basis as \((X,V,Y,U,Z,Q)\), the matrix is block diagonal with blocks
\[
  \begin{pmatrix}5&24\\24&117\end{pmatrix},
  \qquad
  \begin{pmatrix}5&-24\\-24&117\end{pmatrix},
  \qquad (1),
  \qquad (108).
\]
The two nontrivial blocks have determinant \(5\cdot117-24^2=9>0\).  Hence \(G\) is positive definite.  Let \(g\) be the corresponding left-invariant Riemannian metric on \(N\).

Next define a linear map \(D\colon\mathfrak n\to\mathfrak n\) by
\begin{equation}\label{eq:D}
  DX=Y,
  \qquad DY=-X,
  \qquad DZ=0,
\end{equation}
\begin{equation}\label{eq:D2}
  DU=V,
  \qquad DV=-U,
  \qquad DQ=0.
\end{equation}
A direct check on the five nonzero brackets shows that \(D\) is a derivation.  Indeed,
\[
  D[X,Y]=0=[Y,Y]+[X,-X],
\]
\[
  D[X,Z]=V=[Y,Z]+[X,0],
  \qquad
  D[Y,Z]=-U=[-X,Z]+[Y,0],
\]
\[
  D[X,U]=0=[Y,U]+[X,V],
  \qquad
  D[Y,V]=0=[-X,V]+[Y,-U].
\]
For the zero brackets, the only nontrivial cancellations are
\[
  0=D[X,V]=[Y,V]+[X,-U]=Q-Q
\]
and
\[
  0=D[Y,U]=[-X,U]+[Y,V]=-Q+Q.
\]
All other zero bracket checks are immediate.

The matrix of \(D\) in the ordered basis is
\[
D=
\begin{pmatrix}
0&-1&0&0&0&0\\
1&0&0&0&0&0\\
0&0&0&0&0&0\\
0&0&0&0&-1&0\\
0&0&0&1&0&0\\
0&0&0&0&0&0
\end{pmatrix}.
\]
A direct multiplication gives
\begin{equation}\label{eq:D-skew}
  D^TG+GD=0.
\end{equation}
Thus \(D\) is skew-symmetric for \(\langle\cdot,\cdot\rangle\), and \(e^{tD}\) acts by isometric automorphisms of \((N,g)\).  Also \(e^{2\pi D}=\Id\).

Now set
\begin{equation}\label{eq:gamma}
  A(t)=\cos t\,X+\sin t\,Y,
  \qquad
  \gamma(t)=\exp(A(t)).
\end{equation}
Since the exponential map is a diffeomorphism and \(A(t+2\pi)=A(t)\), \(\gamma\) is \(2\pi\)-periodic.  It is nonconstant because \(A(0)=X\) and \(A(\pi)=-X\).  Equivalently,
\[
  \gamma(t)=\exp(e^{tD}X),
\]
so \(\gamma\) is an orbit of the periodic isometric automorphism group generated by \(D\).

It remains to verify that \(\gamma\) is a geodesic.  Since \(\mathfrak n\) is step four, the left logarithmic velocity of \(\gamma(t)=\exp(A(t))\) is
\begin{equation}\label{eq:left-log-exp}
  u(t)=A'(t)-\frac12[A(t),A'(t)]
  +\frac16[A(t),[A(t),A'(t)]]
  -\frac1{24}[A(t),[A(t),[A(t),A'(t)]]].
\end{equation}
Here \(A'(t)=-\sin t\,X+\cos t\,Y\), and the relevant brackets are
\[
  [A(t),A'(t)]=Z,
  \qquad
  [A(t),Z]=\cos t\,U+\sin t\,V,
\]
\[
  [A(t),[A(t),Z]]=Q.
\]
Therefore
\begin{equation}\label{eq:u}
  u(t)=-\sin t\,X+
       \cos t\,Y-
       \frac12 Z+
       \frac16(\cos t\,U+
       \sin t\,V)-
       \frac1{24}Q.
\end{equation}
In particular,
\begin{equation}\label{eq:u0}
  u_0:=u(0)=Y-\frac12 Z+\frac16U-\frac1{24}Q,
\end{equation}
and \(u(t)=e^{tD}u_0\).  Also \(\langle u_0,u_0\rangle=11/16\), so the curve has nonzero speed.

Because \(u(t)=e^{tD}u_0\), and because \(e^{tD}\) is both an isometry and an automorphism, the Euler--Arnold equation \eqref{eq:arnold-search} is equivalent to checking
\begin{equation}\label{eq:reduced-example}
  \langle DE+[u_0,E],u_0\rangle=0
  \qquad\text{for every }E\in\mathfrak n.
\end{equation}
We have
\begin{equation}\label{eq:Gu0}
  Gu_0=
  \left(0,\,1,\,-\frac12,\,-\frac92,\,0,\,-\frac92\right)^T,
\end{equation}
so
\[
  \langle X,u_0\rangle=0,
  \quad
  \langle Y,u_0\rangle=1,
  \quad
  \langle Z,u_0\rangle=-\frac12,
\]
\[
  \langle U,u_0\rangle=-\frac92,
  \quad
  \langle V,u_0\rangle=0,
  \quad
  \langle Q,u_0\rangle=-\frac92.
\]
Using the bracket table, one obtains
\[
\begin{array}{c|c|c}
E & DE+[u_0,E] & \langle DE+[u_0,E],u_0\rangle\\
\midrule
X & Y-Z+\frac12U-\frac16Q
  & 1+\frac12-\frac94+\frac34=0\\[1mm]
Y & -X+\frac12V
  & 0\\[1mm]
Z & V
  & 0\\[1mm]
U & V
  & 0\\[1mm]
V & -U+Q
  & \frac92-\frac92=0\\[1mm]
Q & 0
  & 0.
\end{array}
\]
Thus \eqref{eq:reduced-example} holds on a basis, hence for all \(E\in\mathfrak n\).  Therefore \(\gamma\) is a geodesic.  We have proved that \((N,g)\) is a homogeneous Riemannian manifold diffeomorphic to \(\R^6\) and that it admits the nonconstant closed geodesic \eqref{eq:gamma}.  This proves Theorem~\ref{thm:main}.

\section*{Acknowledgements}

The author acknowledges partial financial support from his current Australian Research Council (ARC) grants DP250101080 and  FT250100880.  He thanks Max Orchard for introducing him to this problem, and Max Orchard, James Stanfield, and Ramiro Lafuente for their interest.

\bibliographystyle{plain}
\bibliography{blbib}

@article{BoehmLafuente2018Immortal,
  author        = {B{\"o}hm, Christoph and Lafuente, Ramiro A.},
  title         = {Immortal homogeneous Ricci flows},
  journal       = {Inventiones Mathematicae},
  volume        = {212},
  number        = {2},
  pages         = {461--529},
  year          = {2018},
  doi           = {10.1007/s00222-017-0771-z},
  eprint        = {1701.00628},
  archivePrefix = {arXiv},
  primaryClass  = {math.DG}
}

@misc{Berichon2019ClosedGeodesicsEuclidean,
  author       = {Berichon, Rohin},
  title        = {Closed Geodesics on Euclidean Homogeneous Spaces},
  howpublished = {AMSI Vacation Research Scholarship report, University of Queensland},
  year         = {2019},
  url          = {https://amsi.org.au/2018-19-vrs/closed-geodesics-on-euclidean-homogeneous-spaces/}
}

@article{NaitohSakane1981Conjugate,
  author  = {Naitoh, Hiroo and Sakane, Yusuke},
  title   = {On conjugate points of a nilpotent Lie group},
  journal = {Tsukuba Journal of Mathematics},
  volume  = {5},
  number  = {1},
  pages   = {143--152},
  year    = {1981},
  doi     = {10.21099/tkbjm/1496159325}
}

@article{Eberlein1994GeometryTwoStepI,
  author  = {Eberlein, Patrick},
  title   = {Geometry of {$2$}-step nilpotent groups with a left invariant metric},
  journal = {Annales scientifiques de l'{\'E}cole Normale Sup{\'e}rieure},
  series  = {4},
  volume  = {27},
  number  = {5},
  pages   = {611--660},
  year    = {1994},
  doi     = {10.24033/asens.1702}
}

@article{Eberlein1994GeometryTwoStepII,
  author  = {Eberlein, Patrick},
  title   = {Geometry of {$2$}-step nilpotent groups with a left invariant metric. {II}},
  journal = {Transactions of the American Mathematical Society},
  volume  = {343},
  number  = {2},
  pages   = {805--828},
  year    = {1994},
  doi     = {10.1090/S0002-9947-1994-1214782-2}
}

@article{Mast1994ClosedGeodesics,
  author  = {Mast, Maura B.},
  title   = {Closed Geodesics in {$2$}-Step Nilmanifolds},
  journal = {Indiana University Mathematics Journal},
  volume  = {43},
  number  = {3},
  pages   = {885--911},
  year    = {1994},
  doi     = {10.1512/iumj.1994.43.43038}
}

@article{LeePark1996SmoothlyClosed,
  author  = {Lee, Kyung Bai and Park, Keun},
  title   = {Smoothly closed geodesics in {$2$}-step nilmanifolds},
  journal = {Indiana University Mathematics Journal},
  volume  = {45},
  number  = {1},
  pages   = {1--14},
  year    = {1996},
  doi     = {10.1512/iumj.1996.45.1077}
}

@article{GornetMast2003LengthMinimizing,
  author  = {Gornet, Ruth and Mast, Maura B.},
  title   = {Length minimizing geodesics and the length spectrum of {Riemannian} two-step nilmanifolds},
  journal = {The Journal of Geometric Analysis},
  volume  = {13},
  number  = {1},
  pages   = {107--143},
  year    = {2003},
  doi     = {10.1007/BF02931000}
}

@article{DeCoste2008ClosedChevalley,
  author        = {DeCoste, Rachelle C.},
  title         = {Closed geodesics on compact nilmanifolds with {Chevalley} rational structure},
  journal       = {Manuscripta Mathematica},
  volume        = {127},
  number        = {3},
  pages         = {309--343},
  year          = {2008},
  doi           = {10.1007/s00229-008-0206-7},
  eprint        = {math/0703930},
  archivePrefix = {arXiv}
}

@article{KowalskiVanhecke1991HomogeneousGeodesics,
  author  = {Kowalski, Old{\v r}ich and Vanhecke, Lieven},
  title   = {Riemannian manifolds with homogeneous geodesics},
  journal = {Bollettino dell'Unione Matematica Italiana. B},
  volume  = {5},
  pages   = {189--246},
  year    = {1991}
}

@incollection{Gordon1996GeodesicOrbits,
  author    = {Gordon, Carolyn S.},
  title     = {Homogeneous {Riemannian} manifolds whose geodesics are orbits},
  booktitle = {Topics in Geometry: In Memory of Joseph D'Atri},
  series    = {Progress in Nonlinear Differential Equations and Their Applications},
  volume    = {20},
  pages     = {155--174},
  publisher = {Birkh{\"a}user},
  address   = {Boston},
  year      = {1996}
}

@article{GordonNikonorov2018GOstructuresRn,
  author        = {Gordon, Carolyn S. and Nikonorov, Yurii G.},
  title         = {Geodesic orbit {Riemannian} structures on {$\mathbb{R}^n$}},
  journal       = {Journal of Geometry and Physics},
  volume        = {134},
  pages         = {235--243},
  year          = {2018},
  doi           = {10.1016/j.geomphys.2018.08.018},
  eprint        = {1803.01023},
  archivePrefix = {arXiv},
  primaryClass  = {math.DG}
}

@article{Nikonorov2024GO_nilmanifolds,
  author        = {Nikonorov, Yurii G.},
  title         = {On geodesic orbit nilmanifolds},
  journal       = {Journal of Geometry and Physics},
  volume        = {203},
  pages         = {105257},
  year          = {2024},
  doi           = {10.1016/j.geomphys.2024.105257},
  eprint        = {2402.17548},
  archivePrefix = {arXiv},
  primaryClass  = {math.DG}
}

@misc{LeBrigantLichtenfelzPreston2024Conjugate,
  author        = {Le Brigant, Alice and Lichtenfelz, Leandro and Preston, Stephen C.},
  title         = {Conjugate points on {Lie} groups with left-invariant metrics},
  year          = {2024},
  eprint        = {2408.03854},
  archivePrefix = {arXiv},
  primaryClass  = {math.DG},
  note          = {Version 3 appeared in 2025}
}

\end{document}